\newtheorem{theorem}{Theorem}[section]
\newtheorem{lemma}[theorem]{Lemma}
\theoremstyle{definition}
\newtheorem{definition}[theorem]{Definition}
\newtheorem{example}[theorem]{Example}
\newtheorem{notation}[theorem]{Notation}
\numberwithin{equation}{theorem}
\def\ge{\geqslant}
\def\le{\leqslant}
\def\bar{\overline}
\def\to{\longrightarrow}
\def\D{\Delta}
\def\fpt{\operatorname{fpt}}
\def\lct{\operatorname{lct}}
\def\init{\operatorname{in}}
\def\modulo{\operatorname{modulo}}
\def\frakm{\mathfrak{m}}
\def\FF{\mathbb{F}}
\def\NN{\mathbb{N}}
\begin{document}
\title{The $F$-pure threshold of a determinantal ideal}

\author{Lance Edward Miller}
\address{Department of Mathematical Sciences, University of Arkansas, Fayetteville, AR~72701, USA} \email{lem016@uark.edu}

\author{Anurag K. Singh}
\address{Department of Mathematics, University of Utah, 155 S 1400 E, Salt Lake City, UT~84112,~USA} \email{singh@math.utah.edu}

\author{Matteo Varbaro}
\address{Dipartimento di Matematica, Universit\`a di Genova, Via Dodecaneso 35, I-16146 Genova, Italy} \email{varbaro@dima.unige.it}

\thanks{L.E.M~was supported by VIGRE grant~0602219, and A.K.S.~by NSF grant DMS~1162585. A.K.S. and M.V. were also supported by NSF grant~0932078000 while in residence at MSRI}

\begin{abstract}
The $F$-pure threshold is a numerical invariant of prime characteristic singularities, that constitutes an analogue of the log canonical thresholds in characteristic zero. We compute the $F$-pure thresholds of determinantal ideals, i.e., of ideals generated by the minors of a generic matrix.
\end{abstract}

\maketitle

\section{Introduction}
Consider the ring of polynomials in a matrix of indeterminates $X$, with coefficients in a field of prime characteristic. We compute the $F$-pure thresholds of determinantal ideals, i.e., of ideals generated by the minors of $X$ of a fixed size.

The notion of \emph{$F$-pure thresholds} is due to Takagi and Watanabe~\cite{TW}, see also Musta\c t\u a, Takagi, and Watanabe~\cite{MTW}. These are positive characteristic invariants of singularities, analogous to log canonical thresholds in characteristic zero. While the definition exists in greater generality---see the above papers---the following is adequate for our purpose:

\begin{definition}
Let $R$ be a polynomial ring over a field of characteristic $p>0$, with the homogeneous maximal ideal denoted by $\frakm$. For a homogeneous proper ideal $I$, and integer $q=p^e$, set
\[
\nu_I(q)=\max\big\{r\in\NN\mid I^r\nsubseteq\frakm^{[q]}\big\}\,,
\]
where $\frakm^{[q]}=(a^q\mid a\in\frakm)$. If $I$ is generated by $N$ elements, it is readily seen that $0\le\nu_I(q)\le N(q-1)$. Moreover, if $f\in I^r\setminus\frakm^{[q]}$, then $f^p\in I^{pr}\setminus\frakm^{[pq]}$. Thus,
\[
\nu_I(pq)\ge p\nu_I(q)\,.
\]
It follows that $\big\{\nu_I(p^e)/p^e\big\}_{e\ge1}$ is a bounded monotone sequence; its limit is the \emph{$F$-pure threshold of $I$}, denoted $\fpt(I)$.
\end{definition}

The $F$-pure threshold is known to be rational in a number of cases, see, for example, \cite{BMS:MMJ, BMS:TAMS, BSTZ, Hara, KLZ:JALG}. The theory of $F$-pure thresholds is motivated by connections to log canonical thresholds; for simplicity, and to conform to the above context, let $I$ be a homogeneous ideal in a polynomial ring over the field of rational numbers. Using $``I\modulo p"$ to denote the corresponding characteristic~$p$ model, one has the inequality
\[
\fpt(I\modulo p)\le\lct(I)\qquad\text{ for all }p\gg0\,,
\]
where $\lct(I)$ denotes the log canonical threshold of $I$. Moreover,
\begin{equation}
\label{eqn:limit}
\lim_{p\to\infty}\fpt(I\modulo p)=\lct(I)\,.
\end{equation}
These follow from work of Hara and Yoshida~\cite{Hara-Yoshida}; see \cite[Theorems~3.3,~3.4]{MTW}.

The $F$-pure thresholds of defining ideals of Calabi-Yau hypersurfaces are computed in~\cite{Bhatt-Singh}. Hern\'andez has computed $F$-pure thresholds for binomial hypersurfaces~\cite{Hernandez1} and for diagonal hypersurfaces~\cite{Hernandez2}. In the present paper, we perform the computation for determinantal ideals:

\begin{theorem}
\label{theorem:main}
Fix positive integers $t\le m\le n$, and let $X$ be an $m\times n$ matrix of indeterminates over a field $\FF$ of prime characteristic. Let $R$ be the polynomial ring~$\FF[X]$, and $I_t$ the ideal generated by the size $t$ minors of $X$.

The $F$-pure threshold of $I_t$ is
\[
\min\left\{\frac{(m-k)(n-k)}{t-k}\mid k=0,\dots,t-1\right\}\,.
\]
\end{theorem}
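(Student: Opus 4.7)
The plan is to establish, for $c_k := (m-k)(n-k)/(t-k)$, the two inequalities $\fpt(I_t) \le \min_k c_k$ and $\fpt(I_t) \ge \min_k c_k$ by rather different methods.

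\textbf{Upper bound.} For each $k \in \{0,\dots,t-1\}$, I would use the classical containment $I_t \subseteq I_{k+1}^{(t-k)}$, where the right-hand side is the $(t-k)$-th symbolic power of the prime $I_{k+1}$; this holds because every $t$-minor vanishes to order at least $t-k$ at the generic point of the variety of matrices of rank at most $k$. Monotonicity of $\fpt$ under ideal containment then yields $\fpt(I_t) \le \fpt(I_{k+1}^{(t-k)})$. To estimate the right-hand side, localize at $\mathfrak{p} := I_{k+1}$: since the generic point of $V(I_{k+1})$ lies in the smooth locus of the rank-$\le k$ variety, $R_{\mathfrak{p}}$ is a regular local ring of dimension $h := (m-k)(n-k) = \mathrm{ht}(I_{k+1})$, and the symbolic power localizes to the ordinary power $(\mathfrak{p}R_{\mathfrak{p}})^{t-k}$ of the maximal ideal $\mathfrak{n} := \mathfrak{p}R_{\mathfrak{p}}$. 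The standard identity $\fpt(\mathfrak{n}^s) = h/s$ in a regular local ring of dimension $h$ then gives $\fpt(I_t) \le \fpt_{\mathfrak{p}}(\mathfrak{n}^{t-k}) = h/(t-k) = c_k$; taking the minimum over $k$ gives the upper bound.

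\textbf{Lower bound.} Let $k^{*}$ achieve the minimum $c_{k^{*}}$. For each $q = p^e$, I would construct explicitly an element $f \in I_t^r \setminus \frakm^{[q]}$ with $r/q \to c_{k^{*}}$ as $e \to \infty$. The construction takes $f = \prod_{\sigma} \Delta_{I_\sigma,J_\sigma}^{a_\sigma}$, a product of $t$-minors along a ``staircase'' pattern adapted to $k^{*}$: consecutive minors share a fixed $k^{*}$-subset of rows or columns while the $(t-k^{*})$-sized complements slide across the matrix. One chooses the multiplicities $a_\sigma$ so that some monomial of the Leibniz expansion of $f$ has every variable to exponent at most $q-1$, and verifies that the coefficient of this monomial is nonzero modulo $p$, typically by arranging that it arises from a unique sequence of term-choices across the factors.

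\textbf{Main obstacle.} The delicate part is the lower bound, primarily because coefficients in the Leibniz expansion can vanish modulo $p$. The caution is already visible in a $2\times 3$ matrix with $t=2$: the natural candidate $\Delta_{12}^{q-1}\Delta_{13}^{q-1}\Delta_{23}^{q-1}$ has its unique monomial with every variable to exponent $q-1$ carrying coefficient $\sum_i(-1)^i\binom{q-1}{i}^3$, which is divisible by $p$ by Dixon's identity. The staircase must therefore be designed so that the targeted monomial avoids all such accidental cancellations; identifying the right combinatorial pattern uniformly in $(m,n,t,k^{*})$---especially when $k^{*}$ is interior to $\{0,\dots,t-1\}$---and verifying the nonvanishing of the resulting coefficient is the main combinatorial work.
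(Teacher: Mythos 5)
Your \textbf{upper bound} argument is sound but rests on heavier machinery than the paper uses. The chain $\fpt(I_t) \le \fpt(I_{k+1}^{(t-k)}) \le \fpt_{\mathfrak p}\big((\mathfrak p R_{\mathfrak p})^{t-k}\big) = (m-k)(n-k)/(t-k)$ is correct, but the middle inequality is lower semicontinuity of the $F$-pure threshold across $\operatorname{Spec}R$, a genuine theorem (via test ideals commuting with localization). The paper avoids invoking semicontinuity: starting from $\delta_k^{t-k-1}\delta_t \in \overline{I_{k+1}^{t-k}}$ (Bruns), it applies Brian\c con--Skoda to land in honest powers $(\delta_k^{t-k-1}I_t)^{N+l}\subseteq I_{k+1}^{(t-k)l}$, localizes at $I_{k+1}$ where $\delta_k$ is a unit, uses that a high enough ordinary power of $\mathfrak p R_{\mathfrak p}$ lies in $\mathfrak p^{[q]}R_{\mathfrak p}$ in a regular local ring of dimension $(m-k)(n-k)$, and comes back to $R$ using that $I_{k+1}^{[q]}$ is $I_{k+1}$-primary by flatness of Frobenius. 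This gives an explicit inequality $\nu_{I_t}(q)\le N+(q-1)(m-k)(n-k)/(t-k)$ with no appeal to semicontinuity. Your route is a defensible alternative, but you should explicitly cite the semicontinuity result rather than treat it as a localization formality.

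Your \textbf{lower bound}, as you yourself observe, has a genuine gap, and I would say the gap is the entire content of the theorem. Your plan is to take a product of $t$-minors with multiplicities, extract the ``balanced'' monomial with all exponents $\le q-1$, and verify its coefficient is nonzero mod $p$. Your $2\times 3$ example with Dixon's identity shows exactly why this is treacherous, and you have no mechanism to rule out such cancellations uniformly in $(m,n,t,k^\ast,p)$. The paper's construction is structurally different in three ways that together dissolve the problem. First, the test element $\Delta$ is a product of minors of \emph{all sizes} from $1$ up to $m$ (the staircase elements $\Delta_k$ and $\Delta_k'$ of Notation~2.4), not a product of $t$-minors. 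Second, membership in a high power of $I_t$ is not established directly: one shows $\Delta\in\overline{I_t^{(m-k)(n-k)}}$ using Bruns' primary decomposition of $\overline{I_t^s}$, which only requires a degree count and a count of factors, and then uses the definition of integral closure (a single auxiliary $f$ with $f\Delta^l\in I_t^{(m-k)(n-k)l}$ for all $l$) to convert this into the needed membership statements up to the negligible correction $\deg f$. Third, and most importantly, non-membership in $\mathfrak m^{[q]}$ is checked via the \emph{initial term} of $\Delta$ under a diagonal term order: each $\operatorname{in}(\Delta_k)$, $\operatorname{in}(\Delta_k')$ is a squarefree monomial with coefficient $\pm1$, so $\operatorname{in}(\Delta)$ has every variable to exponent at most $t-k$ and carries a unit coefficient. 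There is no Leibniz expansion, no binomial coefficient, and therefore nothing to cancel mod $p$. Your outline identifies the obstacle but does not supply this idea, and without it the lower bound does not go through; sticking to $t$-minors and raw coefficient bookkeeping is precisely what the paper's use of integral closure and initial terms is engineered to avoid.
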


It follows that the $F$-pure threshold of a determinantal ideal is independent of the characteristic: for each prime characteristic, it agrees with the log canonical threshold of the corresponding characteristic zero determinantal ideal, as computed by Johnson~\cite[Theorem~6.1]{Johnson} or Docampo~\cite[Theorem~5.6]{Docampo} using log resolutions as in Vainsencher~\cite{Vainsencher}. In view of \eqref{eqn:limit}, Theorem~\ref{theorem:main} recovers the calculation of the characteristic zero log canonical threshold.

\section{The computations}

The primary decomposition of powers of determinantal ideals, i.e., of the ideals $I_t^s$, was computed by DeConcini, Eisenbud, and Procesi~\cite{DEP} in the case of characteristic zero, and extended to the case of \emph{non-exceptional} prime characteristic by Bruns and Vetter~\cite[Chapter~10]{BV}. By Bruns~\cite[Theorem~1.3]{Bruns}, the intersection of the primary ideals arising in a primary decomposition of $I_t^s$ in non-exceptional characteristics, yields, in all characteristics, the integral closure $\bar{I_t^s}$. We record this below in the form that is used later in the paper:

\begin{theorem}[Bruns]
\label{theorem:Bruns}
Let $s$ be a positive integer, and let $\delta_1,\dots,\delta_h$ be minors of the matrix $X$. If $h\le s$ and $\sum_i\deg\delta_i=ts$, then
\[
\delta_1\cdots\delta_h\ \in\ \bar{I_t^s}\,.
\]
\end{theorem}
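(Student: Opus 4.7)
My plan is to combine the primary decomposition of $I_t^s$ in non-exceptional characteristic---due to DeConcini-Eisenbud-Procesi~\cite{DEP} and extended by Bruns-Vetter~\cite[Chapter~10]{BV}---with Bruns' Theorem~1.3 recalled just above. That decomposition reads
\[
I_t^s\ =\ \bigcap_{j=1}^{t} I_j^{(s(t-j+1))},
\]
with $I_j^{(k)}$ the $k$-th symbolic power of the prime $I_j$; the cited Theorem~1.3 then identifies this intersection with $\bar{I_t^s}$ in every characteristic. It therefore suffices to show, for each $j\in\{1,\dots,t\}$, that $\delta_1\cdots\delta_h\in I_j^{(s(t-j+1))}$.

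To do this I would introduce the symbolic order $v_j(f):=\sup\{k : f\in I_j^{(k)}\}$; the containment $I_j^{(a)}I_j^{(b)}\subseteq I_j^{(a+b)}$ makes $v_j$ superadditive on products. The main technical input, and the step I expect to be the chief obstacle, is the bound
\[
v_j(\delta)\ \ge\ \max(0,\, d-j+1) \qquad\text{for every $d$-minor $\delta$.}
\]
My approach would be to specialize the entries of $X$ to a perturbation $Y+\varepsilon Z$ of a fixed rank-$(j-1)$ matrix $Y$ by a generic matrix $Z$, and to observe that the $\varepsilon$-adic order of $\delta(Y+\varepsilon Z)$ equals $\max(0,d-j+1)$, since at least $d-j+1$ rank increments are needed to lift from rank $j-1$ to rank $d$. (This fact is classical in determinantal theory; an alternative is to read it off the principal radical system machinery of Bruns-Vetter.)

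Granting the bound, superadditivity gives
\[
v_j(\delta_1\cdots\delta_h)\ \ge\ \sum_{i=1}^{h}\max(0,\, d_i-j+1),
\]
so the theorem reduces to the elementary inequality
\[
\sum_{i=1}^{h}\max(0,\, d_i-j+1)\ \ge\ s(t-j+1) \qquad(j=1,\dots,t).
\]
Partitioning the indices by whether $d_i\ge j$ and combining the hypotheses $\sum_i d_i=ts$ and $h\le s$, I would reduce this in a few lines to the chain $ts-(j-1)h\ge ts-(j-1)s=s(t-j+1)$, which finishes the proof.
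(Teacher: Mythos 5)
Your proof follows the same top-level route as the paper: both reduce, via Bruns' Theorem~1.3, to showing $\delta_1\cdots\delta_h\in I_j^{((t-j+1)s)}$ for each $j=1,\dots,t$. Where the paper disposes of this second step by citing \cite[Theorem~10.4]{BV} (the description of the symbolic powers $I_j^{(k)}$ via the $\gamma$-functions on products of minors), you give a self-contained argument through the order function $v_j$, and both your key ingredients check out. The counting inequality is correct: writing $A=\{i:d_i\ge j\}$, one has $\sum_i\max(0,d_i-j+1)=\sum_{i\in A}d_i-(j-1)|A|\ge ts-(j-1)h\ge ts-(j-1)s=s(t-j+1)$, using $d_i\le j-1$ off $A$ and $h\le s$. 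The bound $v_j(\delta)\ge\max(0,d-j+1)$ for a $d$-minor is also correct, and your degeneration heuristic can in fact be made airtight without any genericity issues: expanding $\det\bigl(M(Y)+\varepsilon M(Z)\bigr)$ column by column, every term picking fewer than $d-j+1$ columns from $\varepsilon M(Z)$ picks more than $j-1$ columns from $M(Y)$ and hence vanishes when $\operatorname{rank} Y\le j-1$; thus the $\varepsilon$-order is at least $d-j+1$ for \emph{every} such $Y$ and \emph{every} $Z$, and applying this with $Y$ the generic point of $V(I_j)$ gives $\delta\in I_j^{(d-j+1)}$ directly, since $R_{I_j}$ is a regular local ring with maximal ideal $I_jR_{I_j}$ so that $v_j$ is the order valuation there. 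The upshot is that you have effectively reproved the relevant case of \cite[Theorem~10.4]{BV} rather than quoting it; the paper's version is shorter, yours is more transparent about where the combinatorial condition $h\le s$, $\sum d_i=ts$ is actually used.
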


\begin{proof}
By \cite[Theorem~1.3]{Bruns}, the ideal $\bar{I_t^s}$ has a primary decomposition
\[
\bigcap_{j=1}^t I_j^{((t-j+1)s)}\,.
\]
Thus, it suffices to verify that
\[
\delta_1\cdots\delta_h\ \in\ I_j^{((t-j+1)s)}
\]
for each $j$ with $1\le j\le t$. This follows from~\cite[Theorem~10.4]{BV}.
\end{proof}

We will also need:

\begin{lemma}
\label{lemma:min}
Let $k$ be the least integer in the interval $[0,t-1]$ such that
\[
\frac{(m-k)(n-k)}{t-k}\ \le\ \frac{(m-k-1)(n-k-1)}{t-k-1};
\]
interpreting a positive integer divided by zero as infinity, such a $k$ indeed exists. Set
\[
u\ =\ t(m+n-2k)-mn+k^2\,.
\]
Then $t-k-u\ge0$.

Moreover, if $k$ is nonzero, then $t-k+u>0$; if $k=0$, then $t(m+n-1)\le mn$.
\end{lemma}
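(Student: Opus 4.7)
The plan is to reformulate the defining inequality at each index $k$ as a polynomial inequality and, by direct expansion, identify it as $t - k - u \ge 0$; every assertion in the lemma then falls out by inspection. Specifically, for $0 \le k \le t - 2$ both denominators are positive, so the displayed inequality is equivalent to $(m-k)(n-k)(t-k-1) \le (m-k-1)(n-k-1)(t-k)$. After the substitution $a = m - k$, $b = n - k$, $c = t - k$, its rearrangement collapses to $ab - c(a+b-1) \ge 0$, and a direct calculation (writing $mn = (a+k)(b+k)$ and $m+n-2k = a+b$ in the definition of $u$, so that $u = c(a+b) - ab$) confirms the identity
\[
(m-k)(n-k) - (t-k)(m+n-2k-1) \;=\; (t-k) - u.
\]
Hence the defining inequality at index $k$ is equivalent to the bound $t - k - u \ge 0$.

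For existence, I would check $k = t - 1$: there the polynomial inequality becomes $0 \le (m-t)(n-t)$, which holds since $m, n \ge t$. Matching the author's convention that a positive integer divided by zero equals $+\infty$, this places $k = t - 1$ in the set under consideration, so a least element $k$ exists, and the equivalence above at once yields $t - k - u \ge 0$.

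For the remaining claims, if $k \ge 1$ then by minimality $k - 1$ must fail the polynomial inequality. From $u(k) = t(m+n-2k) - mn + k^2$ one reads off $u(k-1) - u(k) = 2t - 2k + 1$, and a line of algebra then gives $(t - (k - 1)) - u(k-1) = -(t - k + u)$; failure at $k - 1$ therefore forces $t - k + u > 0$. If instead $k = 0$, the bound $t - u \ge 0$ with $u = t(m+n) - mn$ rearranges to $t(m+n-1) \le mn$. The only real obstacle is verifying the displayed identity cleanly; the reduction $u = c(a+b) - ab$ in the $a, b, c$ variables makes it transparent, and once it is in hand every conclusion of the lemma is a one-line consequence.
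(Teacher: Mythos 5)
Your proof is correct and follows essentially the same path as the paper's: both rearrange the defining inequality into the polynomial form $t-k-u\ge 0$, and both use minimality at $k-1$ to obtain $t-k+u>0$. Your substitution $a=m-k$, $b=n-k$, $c=t-k$ and the explicit check at $k=t-1$ merely make the algebra and the existence claim more transparent than the paper's terser presentation.
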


\begin{proof}
Rearranging the inequality above, we have
\[
t(m+n-2k-1)\ \le\ mn-k^2-k\,,
\]
which gives $t-k-u\ge0$. If $k$ is nonzero, then the minimality of $k$ implies that
\[
t(m+n-2k+1)\ >\ mn-k^2+k\,,
\]
equivalently, that $t-k+u>0$. If $k=0$, the assertion is readily verified.
\end{proof}

\begin{notation}
\label{notation}
Let $X$ be an $m\times n$ matrix of indeterminates. Following the notation in \cite{BV}, for indices
\[
1\le a_1<\dots<a_t\le m\qquad\text{and}\qquad 1\le b_1<\dots<b_t\le n\,,
\]
we set $[a_1,\dots,a_t\mid b_1,\dots,b_t]$ to be the minor
\[
\det\begin{pmatrix}
x_{a_1b_1}&\hdots&x_{a_1b_t}\\
\vdots&&\vdots\\
x_{a_tb_1}&\hdots&x_{a_tb_t}\end{pmatrix}\,.
\]
We use the lexicographical term order on $R=\FF[X]$ with
\[
x_{11}>x_{12}>\dots>x_{1n}>x_{21}>\dots>x_{m1}>\dots>x_{mn}\,;
\]
under this term order, the initial form of the minor displayed above is the product of the entries on the leading diagonal, i.e.,
\[
\init\big([a_1,\dots,a_t\mid b_1,\dots,b_t]\big)\ =\ x_{a_1b_1}x_{a_2b_2}\cdots x_{a_tb_t}\,.
\]

For an integer $k$ with $0\le k\le m$, we set $\D_k$ to be the product of minors:
\begin{multline*}
\prod_{i=1}^{n-m+1}[1,\dots,m\mid i,\dots,i+m-1]\\
\times\prod_{j=2}^{m-k}[j,\dots,m\mid 1,\dots,m-j+1]\cdot[1,\dots,m-j+1\mid n-m+j,\dots,n]\,.
\end{multline*}
If $k\ge1$, we set $\D_k'$ to be
\[
\D_k\cdot[m-k+1,\dots,m\mid 1,\dots,k]\,.
\]
Notice that $\deg\D_k=mn-k^2-k$ and that $\deg\D_k'=mn-k^2$. The element $\D_k$ is a product of $m+n-2k-1$ minors and $\D_k'$ of $m+n-2k$ minors.
\end{notation}

\begin{example}
We include an example to assist with the notation. In the case $m=4$ and $n=5$, the elements $\D_2$ and $\D_2'$ are, respectively, the products of the minors determined by the leading diagonals displayed below:
\begin{center}
\setlength{\unitlength}{0.68mm}
\begin{picture}(110,35)
\thicklines
\put(-10,13){$\D_2$}
\put(0,0){\circle*{1.5}}
\put(0,10){\circle*{1.5}}
\put(0,20){\circle*{1.5}}
\put(0,30){\circle*{1.5}}
\put(10,0){\circle*{1.5}}
\put(10,10){\circle*{1.5}}
\put(10,20){\circle*{1.5}}
\put(10,30){\circle*{1.5}}
\put(20,0){\circle*{1.5}}
\put(20,10){\circle*{1.5}}
\put(20,20){\circle*{1.5}}
\put(20,30){\circle*{1.5}}
\put(30,0){\circle*{1.5}}
\put(30,10){\circle*{1.5}}
\put(30,20){\circle*{1.5}}
\put(30,30){\circle*{1.5}}
\put(40,0){\circle*{1.5}}
\put(40,10){\circle*{1.5}}
\put(40,20){\circle*{1.5}}
\put(40,30){\circle*{1.5}}

\put(60,13){$\D_2'$}
\put(70,0){\circle*{1.5}}
\put(70,10){\circle*{1.5}}
\put(70,20){\circle*{1.5}}
\put(70,30){\circle*{1.5}}
\put(80,0){\circle*{1.5}}
\put(80,10){\circle*{1.5}}
\put(80,20){\circle*{1.5}}
\put(80,30){\circle*{1.5}}
\put(90,0){\circle*{1.5}}
\put(90,10){\circle*{1.5}}
\put(90,20){\circle*{1.5}}
\put(90,30){\circle*{1.5}}
\put(100,0){\circle*{1.5}}
\put(100,10){\circle*{1.5}}
\put(100,20){\circle*{1.5}}
\put(100,30){\circle*{1.5}}
\put(110,0){\circle*{1.5}}
\put(110,10){\circle*{1.5}}
\put(110,20){\circle*{1.5}}
\put(110,30){\circle*{1.5}}

\put(20,0){\line(-1,1){20}}
\put(30,0){\line(-1,1){30}}
\put(40,0){\line(-1,1){30}}
\put(40,10){\line(-1,1){20}}

\put(80,0){\line(-1,1){10}}
\put(90,0){\line(-1,1){20}}
\put(100,0){\line(-1,1){30}}
\put(110,0){\line(-1,1){30}}
\put(110,10){\line(-1,1){20}}
\end{picture}
\end{center}
The initial form of $\D_2'$ is the square-free monomial
\[
x_{11}\ x_{12}\ x_{13}\ x_{21}\ x_{22}\ x_{23}\ x_{24}\ x_{31}\ x_{32}\ x_{33}\ x_{34}\ x_{35}\ x_{42}\ x_{43}\ x_{44}\ x_{45}\,.
\]
For arbitrary $m,n$, the initial form of $\D_0$ is the product of the $mn$ indeterminates.
\end{example}

\begin{proof}[Proof of Theorem~\ref{theorem:main}]
We first show that for each $k$ with $0\le k\le t-1$, one has 
\[
\fpt(I_t)\ \le\ \frac{(m-k)(n-k)}{t-k}\,.
\]

Let $\delta_k$ and $\delta_t$ be minors of size $k$ and $t$ respectively. Theorem~\ref{theorem:Bruns} implies that
\[
\delta_k^{t-k-1}\delta_t\ \in\ \bar{I_{k+1}^{t-k}}\,,
\]
and hence that $\delta_k^{t-k-1}I_t\ \subseteq\ \bar{I_{k+1}^{t-k}}$. By the Brian\c con-Skoda theorem, see, for example, \cite[Theorem~5.4]{HH:JAMS}, there exists an integer $N$ such that
\[
\left(\delta_k^{t-k-1}I_t\right)^{N+l}\ \in\ I_{k+1}^{(t-k)l}
\]
for each integer $l\ge1$. Localizing at the prime ideal $I_{k+1}$ of $R$, one has
\[
I_t^{N+l}\ \subseteq\ I_{k+1}^{(t-k)l}R_{I_{k+1}}\qquad\text{ for each }l\ge1\,,
\]
as the element $\delta_k$ is a unit in $R_{I_{k+1}}$. Since $R_{I_{k+1}}$ is a regular local ring of dimension $(m-k)(n-k)$, with maximal ideal $I_{k+1}R_{I_{k+1}}$, it follows that
\[
I_t^{N+l}\ \subseteq\ I_{k+1}^{[q]}R_{I_{k+1}}
\]
for positive integers $l$ and $q=p^e$ satisfying
\[
(t-k)l\ >\ (q-1)(m-k)(n-k)\,.
\]
Returning to the polynomial ring $R$, the ideal $I_{k+1}$ is the unique associated prime of $I_{k+1}^{[q]}$; this follows from the flatness of the Frobenius endomorphism, see for example, \cite[Corollary~21.11]{twentyfour}. Hence, in the ring $R$, we have
\[
I_t^{N+l}\ \subseteq\ I_{k+1}^{[q]}
\]
for all integers $q,l$ satisfying the above inequality. This implies that
\[
\nu_{I_t}(q)\ \le\ N+\frac{(q-1)(m-k)(n-k)}{t-k}\,.
\]
Dividing by $q$ and passing to the limit, one obtains
\[
\fpt(I_t)\ \le\ \frac{(m-k)(n-k)}{t-k}\,.
\]

Next, fix $k$ and $u$ be as in Lemma~\ref{lemma:min}, and consider $\D_k$ and $\D_k'$ as in Notation~\ref{notation}; the latter is defined only in the case $k\ge1$. Set
\[
\D=\begin{cases}
\D_0^t & \text{ if }\quad k=0,\\
\D_k^u\cdot(\D_k')^{t-k-u} & \text{ if }\quad k\ge1\text{ and }u\ge0,\\
(\D_k')^{t-k+u}\cdot\D_{k-1}^{-u} & \text{ if }\quad k\ge1\text{ and }u<0,
\end{cases}
\]
bearing in mind that $t-k-u\ge0$ by Lemma~\ref{lemma:min}.

We claim that $\D$ belongs to the integral closure of the ideal $I_t^{(m-k)(n-k)}$. This holds by Theorem~\ref{theorem:Bruns}, since, in each case,
\[
\deg\D=t(m-k)(n-k)\,,
\]
and $\D$ is a product of at most $(m-k)(n-k)$ minors: if $k\ge1$, then $\D$ is a product of exactly $(m-k)(n-k)$ minors, whereas if $k=0$ then $\D$ is a product of $t(m+n-1)$ minors and, by Lemma~\ref{lemma:min}, one has $t(m+n-1)\le mn$.

Let $\frakm$ be the homogeneous maximal ideal of $R$. For a positive integer $s$ that is not necessarily a power of $p$, set
\[
\frakm^{[s]}=(x_{ij}^s\mid i=1,\dots,m,\ j=1,\dots,n)\,.
\]
Using the lexicographical term order from Notation~\ref{notation}, the initial forms $\init(\D_k)$ and $\init(\D_k')$ are square-free monomials, and
\[
\init(\D)=\begin{cases}
\init(\D_0)^t & \text{ if }\quad k=0,\\
\init(\D_k)^u\cdot\init(\D_k')^{t-k-u} & \text{ if }\quad k\ge1\text{ and }u\ge0,\\
\init(\D_k')^{t-k+u}\cdot\init(\D_{k-1})^{-u} & \text{ if }\quad k\ge1\text{ and }u<0.
\end{cases}
\]
Thus, each variable $x_{ij}$ occurs in the monomial $\init(\D)$ with exponent at most $t-k$. It follows that
\[
\D\ \notin\ \frakm^{[t-k+1]}\,. 
\]

As $\D$ belongs to the integral closure of $I_t^{(m-k)(n-k)}$, there exists a nonzero homogeneous polynomial $f\in R$ such that
\[
f\D^l\ \in\ I_t^{(m-k)(n-k)l}\qquad\text{ for all integers }l\ge1\,.
\]
But then
\[
f\D^l\ \in\ I_t^{(m-k)(n-k)l}\setminus\frakm^{[q]}
\]
for all integers $l$ with $\deg f+l(t-k)\le q-1$. Hence,
\[
\nu_{I_t}(q)\ \ge\ (m-k)(n-k)l\qquad\text{ for all integers $l$ with }l\le\frac{q-1-\deg f}{t-k}\,.
\]
Thus,
\[
\nu_{I_t}(q)\ \ge\ (m-k)(n-k)\left(\frac{q-1-\deg f}{t-k}-1\right)\,,
\]
and dividing by $q$ and passing to the limit, one obtains
\[
\fpt(I_t)\ \ge\ \frac{(m-k)(n-k)}{t-k}\,,
\]
which completes the proof.
\end{proof}


\end{document}